\crefname{section}{Section}{Sections}
\crefname{subsection}{\S}{\S\S}
\crefname{subsubsection}{\S}{\S\S}
\theoremstyle{plain}
\newtheorem{lemma}{Lemma}[section]
\newtheorem{corollary}[lemma]{Corollary}
\newtheorem{theorem}[lemma]{Theorem}
\theoremstyle{plain}
\newtheorem{theoremN}{Theorem}
\theoremstyle{plain}
\newtheorem{definition}[lemma]{Definition}
\newtheorem{examples}[lemma]{Examples}
\newtheorem{remark}[lemma]{Remark}
\newtheorem{remarks}[lemma]{Remarks}
\newtheorem{notation}[lemma]{Notation}
\crefname{definition}{definition}{definitions}
\crefname{ex}{example}{examples}
\crefname{exs}{example}{examples}
\crefname{remark}{remark}{remarks}
\crefname{remarks}{remark}{remarks}
\crefname{convention}{convention}{conventions}
\crefname{notation}{notation}{notations}
\crefname{table}{table}{tables}
\crefname{lemma}{lemma}{lemmas}
\crefname{proposition}{proposition}{propositions}
\crefname{propositionN}{proposition}{propositions}
\crefname{corollary}{corollary}{corollaries}
\crefname{corollaryN}{corollary}{corollaries}
\crefname{theorem}{theorem}{theorems}
\crefname{theoremN}{theorem}{theorems}
\crefname{enumi}{}{}
\crefname{assumption}{assumption}{Assumptions}
\crefname{construction}{construction}{Constructions}
\crefname{equation}{}{}
\numberwithin{equation}{section}
\theoremstyle{nonumberplain}
\newtheorem{proof}{Proof}
\newcommand\pf[1]{\newtheorem{#1}{Proof of \Cref{#1}}}
\newcommand\bF{{\mathbb F}}
\newcommand\bH{{\mathbb H}}
\newcommand\bP{{\mathbb P}}
\newcommand\bR{{\mathbb R}}
\newcommand\bS{{\mathbb S}}
\newcommand\bZ{{\mathbb Z}}
\newcommand\cP{{\mathcal P}}
\newcommand\cR{{\mathcal R}}
\newcommand\cZ{{\mathcal Z}}
\DeclareMathOperator{\id}{id}
\DeclareMathOperator{\Hom}{\mathrm{Hom}}
\newcommand{\cat}[1]{\textsc{#1}}
\newcommand{\qedhere}{\mbox{}\hfill\ensuremath{\blacksquare}}
\newcommand{\xrightarrowdbl}[2][]{%
  \xrightarrow[#1]{#2}\mathrel{\mkern-14mu}\rightarrow
}
\title{Manifold pathologies and Baire-1 functions as cohomotopy groups}
\author{Alexandru Chirvasitu}
\begin{document}

\date{}

\newcommand{\Addresses}{{
  \bigskip
  \footnotesize

  \textsc{Department of Mathematics, University at Buffalo}
  \par\nopagebreak
  \textsc{Buffalo, NY 14260-2900, USA}  
  \par\nopagebreak
  \textit{E-mail address}: \texttt{achirvas@buffalo.edu}


}}

\maketitle

\begin{abstract}
  A slight extension of a construction due to Calabi-Rosenlicht (and later Gabard, Baillif and others) produces a typically non-metrizable $n$-manifold $\mathbb{P}$ by gluing two copies of the open upper half-space $\mathbb{H}_{++}$ in $\mathbb{R}^n$ along the disjoint union of the spaces of rays within $\mathbb{H}_{++}$ originating at points ranging over a subset $S\subseteq \mathbb{R}^{n-1}$ of the boundary $\mathbb{R}^{n-1}=\partial\overline{\mathbb{H}_{++}}$. The fundamental group $\pi_1(\mathbb{P})$ is free on the complement $S^{\times}$ of any singleton in $S\ne\emptyset$, and the main result below is that the first cohomotopy group $\pi^1(\mathbb{P})$, regarded as a space of functions $S^{\times}\to \mathbb{Z}$, is precisely the additive group of integer-valued Baire-1 functions on $S^{\times}$. 

  This occasions a detour on characterizations (perhaps of independent interest) of Baire-1 real-valued functions on a metric space $(B,d)$ as various types of non-tangential boundary limits of continuous functions on $B\times \mathbb{R}_{>0}$.
\end{abstract}

\noindent {\em Key words:
  Baire one;
  Bruschlinsky group;
  Lipschitz;
  Pr\"ufer manifold;
  Pr\"ufer surface;
  boundary;
  cohomotopy group;
  dilatation;
  fundamental group;
  gauge;
  germ;
  non-tangential limit;
}

\vspace{.5cm}

\noindent{MSC 2020: 26A21; 55Q55; 57N65; 54E52; 54C35; 26A16; 55Q05; 55Q52}

\tableofcontents

\section*{Introduction}

The {\it Pr\"ufer surface} ($\bP$ throughout the rest of the present section) of \cite[\S 3]{gab_pruf-mfld} appears to have been initially introduced in \cite[\S 3]{zbMATH03084052} (where it is denoted by $S$) as auxiliary in those authors' verification of a conjecture of Bochner's to the effect that there are non-second-countable (so non-metrizable, or equivalently \cite[Corollary 2.4]{MR0776633}, non-{\it paracompact} \cite[Definition 20.6]{wil_top}) complex-analytic manifolds of complex dimension $>1$. Pr\"ufer surfaces and their cousins exemplify various aspects of the rich subject of non-metrizable manifolds: see e.g. the monograph \cite{zbMATH06375777}, \cite{zbMATH06111447,MR4420868,zbMATH06268283,MR2139756,MR4024539,MR4374950,MR0776633,MR4146463,zbMATH03493562} and their many references, etc.

The description of $\bP$ given in \cite[\S 3]{gab_pruf-mfld} is what \Cref{def:prufn} below builds on: 

\begin{itemize}[wide]
\item Consider the open upper half-plane $\bH_{++}\subset \bR^2$, with its usual topology;

\item Supplement $\bH_{++}$ with a $\bR$-parametrized family of boundary lines $\bR$, with the boundary component $\bR_p\cong \bR$ with parameter
  \begin{equation*}
    p\in \bR:=\text{the usual $x$-axis }\{(x,0)\ |\ x\in \bR\}\subset \bR^2
  \end{equation*}
  consisting of the rays in the upper half-plane based at $p$.

\item The topology on $\bR_p$ is the guessable (``angular'') one, with ray nearness measured by angle inclination, whereas a net $(x_{\lambda})\subset \bH_{++}$ converges to a ray $r\in \bR_p$ if it converges to the point $p\in \bR\subset \bR^2$ in the usual sense, and in addition the rays $\overrightarrow{px_{\lambda}}$ converge to $r$ in the aforementioned angular topology of $\bR_p$.

  This thus far will give a 2-manifold $\bP_{\partial}$ with boundary
  \begin{equation}\label{eq:pruf2bdry}
    \partial \bP_{\partial}
    =
    \coprod_{p\in \bR}\bR_p;
  \end{equation}
  here and throughout the paper an {\it $n$-manifold} is a Hausdorff space locally homeomorphic to either $\bR^n$ or the closed upper half-space therein (as in \cite[Introduction]{MR4420868}, for example).
  
\item Finally, $\bP$ is the {\it double} \cite[Example 9.32]{lee_mfld}
  \begin{equation*}
    \bP := 2\bP_{\partial} := \bP_{\partial}\coprod_{\partial \bP_{\partial}}\bP_{\partial}.
  \end{equation*}
\end{itemize}

This produces a connected (real-analytic, even \cite[\S 3]{zbMATH03084052}) boundary-less 2-manifold, whose fundamental group is \cite[Proposition 3]{gab_pruf-mfld} free on continuum many generators, naturally parametrized in the course of that proof by the subset $\bR^{\times}:=\bR\setminus\{0\}$ of the space $\bR$ housing the parameters $p$ of \Cref{eq:pruf2bdry}.

On the same homotopy-theoretic theme, recall the first {\it cohomotopy} \cite[\S VII.1]{hu_ht_1959} or {\it Bruschlinsky group} (\cite[\S II.7]{hu_ht_1959}, \cite[pre \S 1]{zbMATH03565747})
\begin{equation}\label{eq:bruschx}
  \pi^1(X)
  :=
  [X,\bS^1]
  :=
  \text{free homotopy classes of continuous maps }X\to \bS^1
\end{equation}
of a space $X$. There is generally an obvious pairing (or {\it comparison}) morphism \cite[\S VII.1]{hu_ht_1959}
\begin{equation}\label{eq:comp}
  \pi^1(X)
  \xrightarrow{\quad\cat{comp}\quad}
  \Hom(\pi_1(X),\ \bZ\cong \pi_1(\bS^1))
  ,\quad
  (X\text{ path-connected})
\end{equation}
obtained by composing maps $\bS^1\to X$ and $X\to \bS^1$ into loops in $\bS^1$, and for locally path-connected $X$ (such as the manifold $\bP$) $\cat{comp}$ is injective: continuous maps $X\to \bS^1$ trivial on $\pi_1$ lift \cite[Lemma 79.1]{mnk} through the universal cover $\bR\to \bS^1$ and hence homotope to constants. Recalling \cite[Definition 24.1]{kech_descr} that the {\it Baire-1 functions} between metrizable spaces are those which pull back open sets to countable unions of closed sets, one small sample of the main result in \Cref{th:copi1pruf2} below is as follows:

\begin{theoremN}\label{thn:pruf2coh1}
  The Bruschlinsky subgroup
  \begin{equation*}
    \pi^1(\bP)
    \lhook\joinrel\xrightarrow{\quad\cat{comp}\quad}
    \Hom(\pi_1(\bP),\bZ)
    \cong
    \bZ^{\bR^{\times}}
  \end{equation*}
  of the Pr\"ufer surface is precisely the additive group of Baire-1 functions $\bR^{\times}\to \bZ$.  \qedhere  
\end{theoremN}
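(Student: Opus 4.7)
The plan is to identify, for each continuous $f: \bP \to \bS^1$, the induced $\phi_f \in \Hom(\pi_1(\bP), \bZ) \cong \bZ^{\bR^{\times}}$ with an integer-valued ``boundary jump'' between the two restrictions $f_i := f|_{\bH_{++}^i}$ ($i = 1, 2$), and then read off the Baire-1 condition from the non-tangential-limit characterization announced as the ``detour'' in the abstract. Since each copy of $\bH_{++}$ is simply connected, the $f_i$ lift through the universal cover $\bR \twoheadrightarrow \bS^1$ to continuous $\tilde f_i: \bH_{++}^i \to \bR$. For each $p \in \bR$ and $\theta \in (0, \pi)$, Pr\"ufer continuity of $f$ ensures that the one-sided limit $\tilde\beta_i(p, \theta) := \lim_{t \to 0^+} \tilde f_i(p + t\cos\theta, t\sin\theta)$ exists as a definite real number --- the lift of the $\bS^1$-valued boundary value of $f$, pinned down by continuity of $\tilde f_i$ on $\bH_{++}$.

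On each connected ray space $\bR_p \cong (0, \pi)$ the difference $\tilde\beta_1(p, \theta) - \tilde\beta_2(p, \theta)$ is integer-valued and continuous in $\theta$, hence constant; call its value $\Delta(p) \in \bZ$. After normalizing so that $\tilde\beta_i(0, \pi/2) = 0$ (both lifts vanishing at a fixed reference vertical ray in $\bR_0$), a direct winding computation along Gabard's generator $\gamma_p$ --- which crosses from a base point in $\bH_{++}^1$ to $\bH_{++}^2$ through a ray in $\bR_p$ and back through the reference ray in $\bR_0$ --- gives $\phi_f(p) = \deg(f \circ \gamma_p) = \Delta(p)$, the only non-trivial jump in the lift of $f$ occurring at the $\bR_p$ crossing.

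For \textbf{necessity}, identify both copies of $\bH_{++}$ with $\bR \times \bR_{>0}$ and form the continuous $T := \tilde f_1 - \tilde f_2: \bR \times \bR_{>0} \to \bR$. Its boundary values are $\bZ$-valued and direction-independent: $\lim_{t \to 0^+} T(p + t\cos\theta, t\sin\theta) = \Delta(p)$ for every $\theta \in (0, \pi)$. The detour characterization then forces $\Delta: \bR \to \bZ$, hence $\phi_f = \Delta|_{\bR^{\times}}$, to be Baire-1. For \textbf{sufficiency}, given any Baire-1 $\phi: \bR^{\times} \to \bZ$, extend by $\Delta(0) := 0$ and apply the characterization in reverse to produce a continuous $T: \bR \times \bR_{>0} \to \bR$ with the prescribed direction-independent ray-limit $\Delta$; set $f_1 := \exp(2\pi i T)$ on $\bH_{++}^1$ and $f_2 := 1$ on $\bH_{++}^2$. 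Because $\Delta$ is integer-valued, the ray-limit of $f_1$ at every ray in $\bR_p$ equals $\exp(2\pi i \Delta(p)) = 1 = f_2|_{\bR_p}$, so the pair glues Pr\"ufer-continuously to $f: \bP \to \bS^1$, and the computation above yields $\phi_f = \phi$.

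The \textbf{main obstacle} is therefore outsourced to the promised detour: characterizing the additive group of Baire-1 real-valued functions on a metric space $(B, d)$ as the direction-independent ray-traces of continuous functions on $B \times \bR_{>0}$. Once that characterization is in hand, the Pr\"ufer-side work reduces to lifting through $\bR \twoheadrightarrow \bS^1$, routine bookkeeping of Pr\"ufer one-sided limits, and a single winding-number computation along the Gabard generators.
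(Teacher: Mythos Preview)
Your approach is essentially the paper's: lift on each simply-connected half, express the winding number as a boundary jump, and outsource the hard work to the Baire-1 $\Leftrightarrow$ non-tangential-limit equivalence. Two small points deserve sharpening. First, throughout you speak of ``direction-independent ray-limits'' (limits along individual rays at each angle $\theta$), but both directions of the argument actually need the stronger \emph{wedge} (non-tangential) limit---this is exactly what Pr\"ufer continuity at a ray supplies on the necessity side and what the detour theorem produces on the sufficiency side, so the argument goes through; just make sure your invocation of the characterization matches its hypotheses. Second, for necessity the paper does not use the detour at all: once you have the global lifts $\tilde f_i$, the function $p\mapsto T(p,1/n)=\tilde f_1(p,1/n)-\tilde f_2(p,1/n)$ is continuous and converges pointwise to $\Delta(p)$, which already gives Baire-1 directly. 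Your route through the non-tangential characterization is correct but heavier than needed there; the detour is only essential for sufficiency, where one must \emph{manufacture} a continuous extension with the right Pr\"ufer boundary behavior. (Cosmetically, the paper glues $\exp(2\pi i\,\overline f)$ against $\exp(-2\pi i\,\overline f)$ rather than your asymmetric $\exp(2\pi i T)$ against $1$; either works.)
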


In the process of unwinding one of the implications (realizing every Baire-1 function as a cohomotopy class) a few alternative characterizations of the Baire-1 property emerge. Once more compressing and trimming for the purpose of illustration (\Cref{th:phiconv,th:phiconvrel}):

\begin{theoremN}\label{thn:ntlim}
  For a function $B\xrightarrow{f}\bR$ on a metric space $(B,d)$ the following conditions are equivalent.
  \begin{enumerate}[(a),wide]
  \item $f$ is Baire-1.

  \item $f$ is the {\it non-tangential limit} along $B\cong B\times\{0\}$ of a continuous function $B\times \bR_{>0}\xrightarrow{\overline{f}}\bR$ in the sense that
    \begin{equation}\label{eq:thn:ntlim:ntlim}
      \overline{f}(b',t)
      \xrightarrow[b'\to b,\ t\to 0]{\quad d(b,b')\le Ct\quad}
      f(b)
      ,\quad
      \forall b\in B
      ,\quad
      \forall C>0.
    \end{equation}

  \item $f$ is the {\it non-tangential limit} along $B\cong B\times\{0\}\subset B'\times \{0\}$ of a continuous function $B'\times \bR_{>0}\xrightarrow{\overline{f}}\bR$ in the sense that \Cref{eq:thn:ntlim:ntlim} holds with $b'$ ranging over some (equivalently, any) metric space $B'$ containing $(B,d)$ isometrically.  \qedhere
  \end{enumerate}  
\end{theoremN}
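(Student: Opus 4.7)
The three conditions form an obvious cycle $(c)\Rightarrow(b)\Rightarrow(a)\Rightarrow(c)$, and I would prove it in that order.

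\textbf{$(c)\Rightarrow(b)$:} bare restriction. The hypothesized $\bar f$ on $B'\times\bR_{>0}$ stays continuous when cut down to $B\times\bR_{>0}$, and at each $b\in B$ the non-tangential cone with $b'\in B$ is a subcone of that with $b'\in B'$, so \cref{eq:thn:ntlim:ntlim} is inherited.

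\textbf{$(b)\Rightarrow(a)$:} the diagonal choice $b':=b$ lies in every non-tangential cone at $b$ (since $d(b,b)=0\le Ct$ for all $C,t>0$), so \cref{eq:thn:ntlim:ntlim} specializes to $\bar f(b,1/n)\to f(b)$ pointwise. The continuous slices $f_n:=\bar f(\,\cdot\,,1/n)$ thus realize $f$ as a pointwise limit of continuous functions, and the metric-domain Lebesgue--Hausdorff theorem identifies that class with the paper's Baire-1 functions (those whose open preimages are $F_\sigma$).

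\textbf{$(a)\Rightarrow(c)$ (the substantive direction):} I would invoke Lebesgue--Hausdorff backward to write $f=\lim_n f_n$ pointwise with continuous $f_n\colon B\to\bR$, then \emph{refine} to a Lipschitz approximation $g_n\colon B\to\bR$ with Lipschitz constant $L_n=o(n)$, still satisfying $g_n\to f$ pointwise. McShane extends each $g_n$ to a Lipschitz $\tilde g_n\colon B'\to\bR$ of the same constant $L_n$, and one then defines $\bar f\colon B'\times\bR_{>0}\to\bR$ by piecewise-linear interpolation in $t$,
\[
\bar f(b',t) := (1-s)\,\tilde g_n(b') + s\,\tilde g_{n+1}(b'), \qquad t = \tfrac{1-s}{n}+\tfrac{s}{n+1},\ \ s\in[0,1].
\]
Continuity on $B'\times\bR_{>0}$ is immediate from the formula. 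For the non-tangential limit at $b\in B$, take $(b',t)$ in the cone $d(b,b')\le Ct$ with $t\in[1/(n+1),1/n]$ (so $n\to\infty$ as $t\to 0$); the triangle inequality combined with the Lipschitz bounds yields
\[
|\bar f(b',t)-f(b)| \le (L_n+L_{n+1})\tfrac{C}{n} + |\tilde g_n(b)-f(b)| + |\tilde g_{n+1}(b)-f(b)| \xrightarrow[n\to\infty]{}\ 0,
\]
using $L_n/n\to 0$ together with $\tilde g_n(b)=g_n(b)\to f(b)$. Specializing to $B'=B$ recovers $(b)$, so $(a)\Rightarrow(b)$ is obtained in the same stroke.

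\textbf{Main obstacle.} The crux is the refinement step in $(a)\Rightarrow(c)$: upgrading an arbitrary pointwise-approximating sequence of continuous functions to a pointwise-approximating sequence of Lipschitz functions with Lipschitz constants growing strictly slower than $n$. This is where the bulk of the technical work lives and where I would expect a dedicated lemma to extract a quantitative modulus-of-continuity bound from the $F_\sigma$ structure of the superlevel sets $\{f>q\}$, $q\in\bQ$---for instance, by McShane-smoothing each $f_n$ at scale $1/n$ after localizing to the (dense, by Baire) neighborhoods on which the oscillation of $f$ is already small. Once that lemma is in hand, the explicit interpolation above delivers $(a)\Rightarrow(c)$ and closes the cycle.
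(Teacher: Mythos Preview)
Your cycle $(c)\Rightarrow(b)\Rightarrow(a)\Rightarrow(c)$, the piecewise-linear interpolation in $t$, and the Lipschitz-based estimate match the paper's argument (carried out more generally for arbitrary gauges in \Cref{th:phiconv} and \Cref{th:phiconvrel}, then specialized in \Cref{cor:approachnontg}) essentially step for step. The one substantive divergence is in how the obstacle you flag gets resolved.

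The paper's resolution is considerably simpler than the dedicated lemma you sketch, and it decouples into two independent moves. First, a known result (attributed to Cs\'asz\'ar--Laczkovich, via the observation that continuous and Lipschitz real-valued functions on a metric space share the same zero-sets, namely all closed subsets) says that every Baire-1 function on a metric space is already a pointwise limit of \emph{some} Lipschitz sequence $(g_k)$, with no control whatsoever on the constants $\mathrm{dil}(g_k)$. Second, a repetition trick then manufactures any growth rate one likes: define $f_n:=g_k$ on successive blocks $m_{k-1}<n\le m_k$, choosing the block endpoints so that $\mathrm{dil}(g_{k+1})\cdot\tfrac{1}{m}<\tfrac{1}{k}$ for all $m>m_k$. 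This gives $\mathrm{dil}(f_n)=o(n)$ (indeed $o$ of any prescribed sequence tending to infinity) while preserving pointwise convergence. By contrast, your proposed route---McShane-smoothing each $f_n$ at scale $1/n$---would deliver $L_n=O(n)$ rather than $o(n)$, so as written it falls just short; the oscillation-localization heuristic you append is not needed once the two-step argument above is in hand.

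One minor difference on the passage to general $B'$: your McShane extension of each $g_n$ is correct and arguably more direct than the paper's maneuver, which instead reuses the same zero-set machinery (restrictions to $B$ of Lipschitz functions on $B'$ still form a unital linear lattice whose zero-sets are exactly the closed subsets of $B$, hence have the same pointwise closure).
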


\subsection*{Acknowledgements}

This work is partially supported by NSF grant DMS-2001128. 


\section{1-cohomotopy for boundary-less Pr\"ufer manifolds}\label{se:copi1pruf}

In their two-dimensional incarnations, the following constructions recover objects variously referred to as {\it Pr\"ufer surfaces} (so named because \cite{zbMATH02591746} credits Pr\"ufer for the original idea). They are very much in the spirit of other generalizations in the literature, e.g. the {\it Pr\"uferization} procedure of \cite[Appendix B]{MR4374950}.

\begin{definition}\label{def:prufn}
  Having fixed a positive integer $n\ge 2$ throughout, write
  \begin{equation}\label{eq:halfsp}
    \begin{aligned}
      \bH_{++}=\bH_{++}^n
      &:=
        \text{ strict upper half-space }
        \{(x_1\cdots x_n)\in \bR^n\ |\ x_n>0\}
        \quad\text{and}\\
      \bH_{+}=\bH_{+}^n
      &:=
        \text{ closure }\overline{\bH_{++}}\text{ in }\bR^n= \{(x_1\cdots x_n)\in \bR^2\ |\ x_n\ge 0\}.
    \end{aligned}    
  \end{equation}

  \begin{enumerate}[(1),wide]
  \item\label{item:def:prufn:bord} For a subset $S\subseteq \bR^{n-1}\cong \partial \bH_{+}$ the {\it bordered Pr\"ufer $n$-manifold} $\bP\bF_{S,\partial}=\bP\bF^n_{S,\partial}$ is obtained by
    \begin{itemize}[wide]
    \item setting
      \begin{equation*}
        \bP\bF_{S,\partial}
        :=
        \bH_{++}\sqcup \coprod_{p\in F}\cR_p
        ,\quad
        \cR_p:=\left\{\text{rays in $\bH_{++}$ originating at $p$}\right\}
      \end{equation*}
      as a set (the rays have $p$ as an origin, apart from which they lie entirely {\it above} the hyperplane $\bR^{n-1}\cong \partial\bH_{+}$);

    \item topologizing $\bH_{++}$ as usual, with its subspace topology inherited from $\bR^n$;

    \item topologizing each $\cR_p$ as a subspace of the projective space $\bP^{n-1}$ of lines in $\bR^n$ passing through $p$;
      
    \item and having a net $(x_{\lambda})_{\lambda}\subset \bH_{++}$ converge to $r\in \cR_p\subset \bP\bF_{S,\partial}$ (language: $(x_{\lambda})$ {\it converges tangentially} to $r$) precisely when
      \begin{equation*}
        \begin{aligned}
          x_{\lambda}
          &\xrightarrow[\quad\lambda\quad]{} p
            \text{ in the usual plane topology and also}\\
          \overrightarrow{px_{\lambda}}
          &\xrightarrow[\quad\lambda\quad]{} r
            \text{ in $\cR_p$}.
        \end{aligned}
      \end{equation*} 
    \end{itemize}

    $\bP\bF^2_{\partial} = \bP\bF^2_{\bR,\partial}$ is the surface $P_0$ of \cite[\S 3]{gab_pruf-mfld}, $\bP_s$ of \cite[\S 2]{zbMATH06111447} and what \cite[Appendix B]{MR4374950} would denote by $P_H(H)$. More generally, for arbitrary $Y\subseteq \bR$, $\bP\bF^2_{Y,\partial}$ is the $P_Y(H)$ of loc. cit. We drop the `$F$' subscript when it happens to be the entire boundary $\bR^{n-1}$ of $\bH_{+}$. 
    
  \item\label{item:def:prufn:dbl} The {\it (plain, or double, or boundary-less) Pr\"ufer $n$-manifold} $\bP\bF_{S}=\bP\bF^n_{S}$ is the {\it double} $2\bP\bF_{S,\partial}$ obtained as usual (e.g. \cite[Example 3.80]{lee_top-mfld_2e_2011}), by gluing two copies of $\bP\bF_{S,\partial}$ along its boundary:
    \begin{equation*}
      \bP\bF^n_{S}
      =
      \bP\bF_{S,\partial}\coprod_{\partial\bP\bF_{S,\partial}} \bP\bF_{S,\partial}
      =
      \bP\bF_{S,\partial}\coprod_{\bigcup_{p\in S}\cR_p} \bP\bF_{S,\partial}
    \end{equation*}
    (see \Cref{re:netssuffice}). 

    $\bP\bF^2=\bP\bF^2_{\bR}$ is also the surface $S$ (real analytic, as noted there) of \cite[\S 3]{zbMATH03084052}. 
  \end{enumerate}
\end{definition}

\begin{remark}\label{re:netssuffice}
  Per the usual correspondence (e.g. \cite[Problem 11D]{wil_top}) between net convergence and topology \Cref{def:prufn} gives sufficient information, but \cite[\S 3]{gab_pruf-mfld} actually describes a local neighborhood basis of a ray $r\in \cR_p$ in $\bP\bF^2_{\partial} = \bP\bF^2_{\bR,\partial}$: the rays in $\cR_p$ leaning away from $r$ at an angle $<\varepsilon$ together with the points in $\bH_{++}$ within that ray wedge and less than $\varepsilon$ away from $p$ in the usual Euclidean distance.

  The same source also argues that $\bP\bF^2_{\partial}$ is (as the naming suggests) a topological 2-manifold (surface) with boundary, and hence its double $\bP\bF$ is a boundary-less surface.

  All of this transports immediately to the higher-dimensional, arbitrary-$F$ setup, making
  \begin{itemize}[wide]
  \item all $\bP\bF^n_{S,\partial}$ (as \Cref{def:prufn} suggests) into contractible $n$-manifolds with boundary
    \begin{equation*}
      \partial \bP\bF^n_{S,\partial}
      =
      \bigcup_{p\in S}\cR_p
    \end{equation*}
    so that
    \begin{equation*}
      \partial \bP\bF^n_{S,\partial}\ne \emptyset
      \iff
      S\ne \emptyset;
    \end{equation*}
    
  \item and all doubles $\bP\bF^n_S$ into boundary-less $n$-manifolds, (path-)connected precisely when $S$ is non-empty. 
  \end{itemize}
\end{remark}

\cite[Proposition 3]{gab_pruf-mfld} (attributed there to Baillif) describes of the fundamental group $\pi_1(\bP\bF^2)$ via (one version of) the {\it Seifert-Van Kampen theorem} \cite[Theorem IV.2.2]{massey_basic-alg-top_1991}.

\begin{notation}\label{not:alphapq}
  Recalling \Cref{eq:halfsp}, fix $p,q\in \bR^{n-1}\cong \partial\bH_{+}$. In the context of \Cref{def:prufn}, define a loop $\alpha_{p,q}$ lying in any $\bP\bF^n_{S}$ so long as $p,q\in F$ as follows:
  \begin{itemize}[wide]
  \item if $p=q$ then the loop is constant at the vertical ray through $p=q$;
  \item otherwise, it consists first of a path $\alpha^+_{p,q}$ in $\bP\bF_{S,\partial}$ from the vertical ray at $p$ to that at $q$, traversing the semi-circle in $\bH_{+}$ having the segment $\overline{pq}$ as its diameter and passing through the vertical ray at $\frac {p+q}2$;
  \item followed by the reversal $\alpha^-_{p,q}$ of that path along the mirror-image circle in the second (doubled) copy of $\bH_{+}$.
  \end{itemize}  
\end{notation}

The aforementioned \cite[Proposition 3]{gab_pruf-mfld}, paraphrased, gives an isomorphism
\begin{equation*}
  \begin{tikzpicture}[>=stealth,auto,baseline=(current  bounding  box.center)]
    \path[anchor=base] 
    (0,0) node (l) {$\bR^{\times}$}
    +(3,.5) node (u) {$\text{free group }F_{\bR^{\times}}$}
    +(6,0) node (r) {$\pi_1(\bP\bF^2)$.}
    ;
    \draw[right hook->] (l) to[bend left=6] node[pos=.5,auto] {$\scriptstyle $} (u);
    \draw[->] (u) to[bend left=6] node[pos=.5,auto] {$\scriptstyle \cong$} (r);
    \draw[->] (l) to[bend right=6] node[pos=.5,auto,swap] {$\scriptstyle t \mapsto \text{class of $\alpha_{0,t}$}$} (r);
  \end{tikzpicture}
\end{equation*}

That argument, too, goes through with only the obvious modifications; we append a proof (in slightly different phrasing) for some semblance of completeness and the reader's convenience. 

\begin{lemma}\label{le:whatispi1}
  For any $n\ge 2$, non-empty $S\subseteq \bR^{n-1}\cong \partial \bH_{+}$ and $p_0\in S$ the diagram
  \begin{equation*}
    \begin{tikzpicture}[>=stealth,auto,baseline=(current  bounding  box.center)]
      \path[anchor=base] 
      (0,0) node (l) {$S^{\times}$}
      +(-.6,0) node (ll) {$=:$}
      +(-1.6,0) node (ll) {$S\setminus\{p_0\}$}
      +(3,.5) node (u) {$\text{free group }F_{S^{\times}}$}
      +(6,0) node (r) {$\pi_1(\bP\bF^n_S)$.}
      ;
      \draw[right hook->] (l) to[bend left=6] node[pos=.5,auto] {$\scriptstyle $} (u);
      \draw[->] (u) to[bend left=6] node[pos=.5,auto] {$\scriptstyle \cong$} (r);
      \draw[->] (l) to[bend right=6] node[pos=.5,auto,swap] {$\scriptstyle p \mapsto \text{class of $\alpha_{p_0,p}$}$} (r);
    \end{tikzpicture}
  \end{equation*}
  describes the fundamental group of $\bP\bF^n_{S}$.
\end{lemma}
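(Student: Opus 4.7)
The plan is to apply the Seifert-Van Kampen theorem to an open cover of $\bP\bF^n_S$ adapted to its double structure. Denote the two copies of $\bP\bF^n_{S,\partial}$ making up $\bP\bF^n_S$ by $M_\pm$, with shared boundary $\bigsqcup_{p\in S}\cR_p$. Using the explicit local neighborhood basis of a boundary ray recalled in \Cref{re:netssuffice}, I would thicken each $M_\pm$ by a small collar drawn from the opposite copy, producing open sets $U_\pm\subseteq \bP\bF^n_S$ that together cover the space and deformation-retract onto the respective $M_\pm$. Because each $M_\pm$ is contractible (\Cref{re:netssuffice}), the $U_\pm$ are simply connected.

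The intersection $U_+\cap U_-$ is then a neighborhood of the common boundary $\bigsqcup_{p\in S}\cR_p$, onto which it deformation-retracts. Each individual $\cR_p$ is (by \Cref{def:prufn}) an open hemisphere in the projective space of lines through $p$, hence homeomorphic to an open $(n-1)$-ball and in particular contractible. Consequently $U_+\cap U_-$ has one simply connected path-component per element of $S$.

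The version of Seifert-Van Kampen suited to disconnected intersections --- whether phrased via Brown's fundamental-groupoid formalism or via the classical spanning-tree refinement for a cover of two simply connected opens glued along a disjoint union of simply connected pieces --- then identifies $\pi_1(\bP\bF^n_S)$ with a free group whose generators correspond to the path-components of $U_+\cap U_-$ lying outside a chosen spanning tree of the cover graph. Combinatorially that graph has two vertices $M_\pm$ and one edge for each $p\in S$; taking the spanning tree to consist of the single edge at $p_0$ leaves exactly one free generator per $p\in S^\times = S\setminus\{p_0\}$. Tracing through the identification, the generator attached to $p$ is represented by a loop that leaves $M_+$ via $\cR_p$ and returns via $\cR_{p_0}$, and is readily homotoped within the two contractible halves to the explicit loop $\alpha_{p_0,p}$ of \Cref{not:alphapq}.

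The main obstacle is the construction of $U_\pm$ and the verification of the deformation retracts inside the non-metrizable Pr\"ufer topology, where all the genuine subtlety sits near the boundary rays: continuity of the thickenings and of the collar retractions must be checked directly against the net-convergence criterion of \Cref{def:prufn}, using the neighborhood basis of \Cref{re:netssuffice}. Once that bookkeeping is dispatched, the Seifert-Van Kampen step itself is essentially formal, and the identification of the generators with $\alpha_{p_0,p}$ is a straightforward unwinding.
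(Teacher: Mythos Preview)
Your approach has a genuine gap, and it lies precisely at the step you flag as mere bookkeeping. The claim that $U_+\cap U_-$ deformation-retracts onto $\bigsqcup_{p\in S}\cR_p$, and hence has one path-component per $p\in S$, is \emph{false} whenever $S$ is uncountable (so in particular for the original Pr\"ufer case $S=\bR^{n-1}$). Suppose for contradiction that $U_+\cap U_-$ had pairwise-disjoint open path-components $K_p$ with $\cR_p\subseteq K_p$. Each $K_p$, being an open neighborhood of $\cR_p$ in the double, must by the neighborhood basis of \Cref{re:netssuffice} meet the interior half-space $\bH_{++}^{+}$ in a non-empty open set. But $\bH_{++}^{+}\cong\bR^n$ is separable and so cannot support an uncountable pairwise-disjoint family of non-empty open subsets. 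The obstruction is thus not a continuity check against net convergence: no choice of collar thickenings can produce the component structure you need. (This is of a piece with the non-collared-boundary phenomena in \cite{MR4420868}.)

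The paper avoids this by taking the ``dual'' cover: open sets
\begin{equation*}
  U_p:=\bH_{++}^{\uparrow}\sqcup\cR_p\sqcup\bH_{++}^{\downarrow},\quad p\in S,
\end{equation*}
each of which is manifestly open and contractible (the chart at $\cR_p$ identifies it with the double of a closed half-space along its boundary, i.e.\ with $\bR^n$), and all of whose pairwise intersections equal the \emph{two}-component space $\bH_{++}^{\uparrow}\sqcup\bH_{++}^{\downarrow}$. The groupoid Van Kampen theorem then gives the fundamental groupoid of $|S|$ arcs sharing a common pair of endpoints, hence a wedge of $|S^{\times}|$ circles after contracting one arc. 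The point is that the $2$-versus-$|S|$ asymmetry has been shifted from the intersection (where separability obstructs you) to the index set of the cover (where it does not).
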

\begin{proof}
  Consider the two copies $\bH^{\bullet}_{++}$, $\bullet\in\{\uparrow,\downarrow\}$ of $\bH_{++}\subset \bR^n$ glued along $\partial \bP\bF_{S,\partial}$. The open subsets
  \begin{equation*}
    U_p:=\bH_{++}^{\uparrow}\sqcup \cR_p\sqcup \bH_{++}^{\downarrow}
    ,\quad
    p\in S.
  \end{equation*}
  are contractible, and intersect pairwise along $\bH_{++}^{\uparrow}\sqcup \bH_{++}^{\downarrow}$. In its {\it groupoid}-theoretic phrasing (e.g. \cite[\S 2, Theorem]{br_vk}), the Van Kampen theorem identifies the groupoid $\pi_1(X,\left\{\uparrow,\downarrow\right\})$ attached (\cite[Chapter 6]{higg_gpds}, \cite[\S 2]{zbMATH03241070}) to any choice
  \begin{equation*}
    \uparrow\in \bH_{++}^{\uparrow}
    \quad\text{and}\quad
    \downarrow\in \bH_{++}^{\downarrow}
  \end{equation*}
  with that of a family of segments $I_p$, $p\in S$ glued along a common pair of endpoints (and given its CW topology). After contracting one segment, this is nothing but a $S^{\times}$-indexed bouquet of circles, hence the conclusion \cite[Example 1.21]{hatcher}. 
\end{proof}

Consider, now, for some connected $\bP\bF=\bP\bF^n_S$, the first cohomotopy group $\pi^1(\bP\bF)$ of \Cref{eq:bruschx}. In reference to \Cref{th:copi1pruf2}, recall also the following two properties one frequently considers for functions between topological (usually at least metric) spaces:
\begin{itemize}[wide]
\item the {\it Baire class one} \cite[Definition 24.1]{kech_descr} (or {\it Baire one}, or  {\it Baire-1} here, for short) are those through which the preimage of every open set is {\it $F_{\sigma}$}, i.e. \cite[\S 1.A, p.1]{kech_descr} a countable union of closed sets;

\item the {\it Baire} functions of \cite[post Example 3.1.31]{sriv_borel} (henceforth {\it sequentially Baire} here, for clarity, when there is danger of confusion) are the sequential pointwise limits of continuous functions. 
\end{itemize}
The two properties are often equivalent and conflated, and the terminology reflects this: \cite[\S 7]{oxt_meas-cat_2e_1980}, for instance, {\it defines} `first class of Baire' as `(sequentially) Baire', so does \cite[Introduction]{steg_b1}, etc.

Sequentially Baire implies Baire-1 (e.g. \cite[proof of Theorem 4, pp.986-987]{steg_b1}), but the converse does not hold unconditionally \cite[paragraph preceding Theorem 24.10]{kech_descr}. The two properties are indeed equivalent for real-valued functions on metric spaces (\cite[Theorem 24.10]{kech_descr} assumes separability for the domain, but that condition does not seem necessary) or for Banach-space-valued functions on complete metric spaces \cite[Theorem 4]{steg_b1}.


\begin{theorem}\label{th:copi1pruf2}  
  Fix a positive integer $n\ge 2$ and a non-empty subspace $S\subseteq \bR^{n-1}\cong \partial\bH_+^n$, and set $S^{\times}:=S\setminus\{p_0\}$ for fixed $p_0\in S$. 

  The comparison embedding
  \begin{equation*}
    \pi^1(\bP\bF)
    \lhook\joinrel\xrightarrow{\quad\cat{comp}\quad}
    \Hom(\pi_1(\bP\bF),\ \bZ)
    ,\quad
    \bP\bF:=\text{Pr\"ufer $n$-fold $\bP\bF^n_S$ of \Cref{def:prufn}\Cref{item:def:prufn:dbl}}
  \end{equation*}
  identifies $\pi^1(\bP\bF)$, as a subspace of
  \begin{equation*}
    \Hom(\pi_1(\bP\bF),\bZ)
    \cong
    \Hom(F_{S^{\times}},\bZ)
    \cong
    \left(\cat{functions }S^{\times}\to \bZ\right),
  \end{equation*}
  with the additive group of Baire-1 functions $S^{\times}\to \bZ$. 
\end{theorem}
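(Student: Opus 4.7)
The plan is as follows. By \Cref{le:whatispi1}, $\pi_1(\bP\bF)$ is freely generated by the classes of the loops $\alpha_{p_0,p}$ for $p\in S^{\times}$, so $\Hom(\pi_1(\bP\bF),\bZ)$ is naturally the set of functions $S^{\times}\to\bZ$; $\bP\bF$ being a manifold is locally path-connected, so $\cat{comp}$ is injective (as recalled in the introduction) and the task reduces to identifying its image.

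For the forward direction, fix a continuous $f\colon\bP\bF\to\bS^1$. Each bordered half $\bP\bF^{\bullet}_{S,\partial}$, $\bullet\in\{\uparrow,\downarrow\}$, is contractible per \Cref{re:netssuffice}, so $f$ restricted to it lifts through $\bR\twoheadrightarrow\bS^1$ to a continuous $\widetilde f^{\bullet}\colon\bP\bF^{\bullet}_{S,\partial}\to\bR$. Put $g^{\bullet}(p):=\widetilde f^{\bullet}(\text{vertical ray at }p)$ for $p\in S$. A routine winding computation along the two semi-arcs of $\alpha_{p_0,p}$ gives
\[
\cat{comp}(f)(p)=\bigl(g^{\uparrow}(p)-g^{\downarrow}(p)\bigr)-\bigl(g^{\uparrow}(p_0)-g^{\downarrow}(p_0)\bigr).
\]
Continuity of $\widetilde f^{\bullet}$ at the vertical ray forces $g^{\bullet}(p)=\lim_{k\to\infty}\widetilde f^{\bullet}(p,1/k)$, exhibiting each $g^{\bullet}$ as a pointwise limit of continuous functions on the separable metric space $S$; this sequential Baire property coincides with Baire-1 here (as recalled in the discussion preceding the statement), and closure of Baire-1 under differences and additive constants makes $\cat{comp}(f)$ an integer-valued Baire-1 function on $S^{\times}$.

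Conversely, given an integer-valued Baire-1 function $\phi\colon S^{\times}\to\bZ$, extend to $\phi\colon S\to\bZ$ by $\phi(p_0):=0$ (still Baire-1) and invoke \Cref{thn:ntlim}(c) for the isometric inclusion $S\hookrightarrow\bR^{n-1}$ to obtain a continuous $\overline\phi\colon\bR^{n-1}\times\bR_{>0}=\bH_{++}\to\bR$ whose non-tangential limit at each $p\in S$ equals $\phi(p)$. Define $f\colon\bP\bF\to\bS^1$ by $f:=\exp(2\pi i\,\overline\phi)$ on the upper copy of $\bH_{++}$, $f\equiv 1$ on the lower copy, and $f\equiv 1$ on the common boundary. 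Continuity at a ray $r\in\cR_p$ reduces to $e^{2\pi i\,\overline\phi(x_{\lambda})}\to 1$ along every Pr\"ufer-tangential net $(x_{\lambda})=(y_{\lambda},t_{\lambda})\subset\bH_{++}^{\uparrow}$ converging to $r$; writing the unit direction of $r$ as $(v_{\mathrm{hor}},v_n)$ with $v_n>0$, Pr\"ufer-tangentiality forces $|y_{\lambda}-p|/t_{\lambda}\to|v_{\mathrm{hor}}|/v_n$, placing $(x_{\lambda})$ inside a bounded-aperture non-tangential cone at $p$, so \Cref{thn:ntlim} gives $\overline\phi(x_{\lambda})\to\phi(p)\in\bZ$ and the exponential tends to $1$. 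Repeating the forward winding computation then confirms $\cat{comp}(f)=\phi$.

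The main obstacle is the mismatch between the directional (ray-by-ray) nature of Pr\"ufer-tangential convergence and the full non-tangential limit of \Cref{thn:ntlim}. The reconciliation is that every ray in $\cR_p$ has strictly positive last coordinate, so every Pr\"ufer-tangential approach — vertical or not — automatically stays inside some non-tangential cone at $p$; this places the reverse-direction argument squarely in the regime of \Cref{thn:ntlim}. On the forward side only the weaker vertical approach is needed, and the sequential-limit-of-continuous-functions characterization of Baire-1 suffices directly.
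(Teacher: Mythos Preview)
Your proof is correct and follows essentially the same two-step strategy as the paper: for the forward direction, lift $f$ to $\bR$ and exhibit the resulting boundary values as pointwise (vertical) limits of continuous functions on $S$; for the reverse direction, realize the given Baire-1 function as a non-tangential boundary limit on $\bH_{++}$ via \Cref{thn:ntlim} and exponentiate.

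There are two cosmetic differences worth recording. In the forward direction the paper first applies a hyperbolic isometry moving $p_0$ to the point at infinity (so that the semicircles $\alpha^{\pm}_{p_0,p}$ become vertical half-lines) and then lifts $g$ path-by-path; you instead lift $f$ once over each contractible bordered half $\bP\bF^{\bullet}_{S,\partial}$ and read off boundary values directly. Your route is shorter and avoids the hyperbolic detour. In the reverse direction the paper defines $g$ as $\exp(\pm 2\pi i\,\overline{f})$ on the two copies of $\bH_{++}$ (mirror images), whereas you set $f\equiv 1$ on the lower copy; both work, and yours makes the winding bookkeeping marginally simpler. Your explicit verification that Pr\"ufer-tangential convergence to any ray in $\cR_p$ lands inside a fixed-aperture cone (because the ray's last coordinate $v_n$ is strictly positive) is a point the paper leaves implicit, and is a useful clarification.
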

\begin{proof}[$\xRightarrow{\quad}$]
  
  The fact that the group structure on $\pi^1$ transports over to the usual additive structure on $\bZ$-valued functions is of course not at issue; the substance of the claim is the link to the Baire property. Here, we argue that any morphism $\pi_1(\bP\bF)\to \bZ$ arising from the cohomotopy class of a continuous map $\bP\bF\xrightarrow{g}\bS^1$ is Baire-1 when restricted to the family $S^{\times}$ of generators $\alpha_{p_0,p}$, $p\in S^{\times}$. 

  $\bH_{++}$ can be identified with the hyperbolic space $\bH^n$ in its {\it upper half-plane model} \cite[\S 2.3]{thurst_3mfld-unpublished}, whereupon the semicircles $\alpha_{p_0,p}^+$ of \Cref{not:alphapq} are precisely the geodesics connecting $p_0$ and $p$ regarded as points in the {\it sphere (or boundary) at infinity} (\cite[\S 2.1]{thurst_3mfld-unpublished}, \cite[Chapter II]{ball_nonpos})
  \begin{equation*}
    \partial_{\infty}\bH^n
    \cong
    \bS^{n-1}
    \cong
    \bR^{n-1}\sqcup \{\infty\}
    \cong \partial \bH_+\sqcup \{\infty\}
  \end{equation*}
  of $\bH^n$. Upon effecting an automorphism of $\bH^n$, we can assume $p_0$ itself is the ideal point $\infty\in \bS^{-1}$, so that the half-circles $\alpha_{p_0,p}^+$ become vertical lines orthogonal to $\bR^{n-1}$. It will be convenient to work in this parametrization, so that
  \begin{itemize}[wide]
  \item $g$ restricts to a continuous function $S^{\times}\times \bR_{>0}\xrightarrow{g}\bS^1$;

  \item converging to $1\in \bS^1$ as height increases; 
    
  \item and furthermore extending continuously to the boundary $(p,0)$ along every vertical half-line $\{p\}\times \bR_{\ge 0}$, $p\in S^{\times}$.
  \end{itemize}
  The vertical path $\infty\xrightarrow{\pi^+} (p,0)$ is $\alpha^+_{p_0,p}$, and said extension is the restriction
  \begin{equation*}
    g_p^+:=g|_{\infty\xrightarrow{\pi^+}(p,0)}
  \end{equation*}
  of $g$ to it. All of this also applies to the second copy of $\bH_{++}$ constituting $\bP\bF$: $g$, initially defined on all of $\bP\bF$, also restricts the semicircle $\alpha^-_{p_0,p}$ recast as a separate (and reversed) copy $(p,0)\xrightarrow{\pi^-}\infty$ of the path $\pi^+$, and the loop $g_*(\alpha_{p_0,p})$ in $\bS^1$ based at $1\in \bS^1$ is obtained as the concatenation $g_p^+*g_p^-$ of 
  \begin{equation}\label{eq:gpm}
    g_p^+:=
    g|_{\infty\xrightarrow{\pi^+}(p,0)}
    \quad\text{and}\quad
    g_p^-:=
    g|_{(p,0)\xrightarrow{\pi^-}\infty}
    ,\quad
    p\in S^{\times}. 
  \end{equation}
  Now, $g^+_p$, regarded as a continuous map
  \begin{equation*}
    I:=[I_{\ell}, I_r]:=\overline{\infty\ (p,0)}
    \xrightarrow{\quad}
    \bS^1,
  \end{equation*}
  lifts to a continuous map $I\xrightarrow{h_p^+}\bR$ with $h_p^+(I_{\ell})=0$ (by homotopy lifting \cite[lemma 79.1]{mnk}). The reversed $g_p^-$ similarly lifts to a map $I\xrightarrow{h_p^-}\bR$ with $h_p^-(I_{\ell})=0$ and
  \begin{equation*}
    \text{class }[g_*(\alpha_{p_0,p})]
    \in \pi_1(\bS^1)\cong \bZ
  \end{equation*}
  is nothing but the (automatically integral) difference $h_p^-(I_r)-h_p^+(I_r)$. To conclude, simply note that because $g$ is continuous on both copies of $\bH_{++}$ and it extends continuously along the vertical paths $\pi^{\pm}$ of \Cref{eq:gpm}, the maps
  \begin{equation*}
    S^{\times}\ni p
    \xmapsto{\quad}
    h_p^{\pm}(I_r)
    \in \bR
  \end{equation*}
  are both sequential limits of continuous real-valued functions and hence \cite[\S 24.B, $2^{nd}$ paragraph]{kech_descr} Baire-1. 
\end{proof}

\section{Non-tangential limits and Borel-1 functions}\label{se:nontg}

Let $(B,d)$ be a metric space (frequently {\it Polish}, i.e. \cite[Definition 3.1]{kech_descr} complete and separable), and consider functions $B\xrightarrow{f}\bR$ exhibiting two contrasting types of behavior.

\begin{itemize}[wide]
\item $f$ might be extensible along
  \begin{equation*}
    B\cong B\times\{0\}\subset B\times \bR_{\ge 0}
  \end{equation*}
  to a continuous function $B\times \bR_{\ge 0}\xrightarrow{\overline{f}}\bR$; naturally, $f$ itself must then be continuous.

\item Alternatively, $f$ might be recoverable as the {\it radial limit} (in terminology borrowed from \cite[\S 11.5]{rud_rc_3e_1987}, say)
  \begin{equation}\label{eq:radlim}
    f(b) = \lim_{t\searrow 0}\overline{f}(b,t)
    ,\quad
    B\times \bR_{>0}\xrightarrow[\text{continuous}]{\quad \overline{f}\quad} \bR.
  \end{equation}
  It is an easy check that this is possible {\it precisely} when $f$ is a pointwise limit of a sequence of continuous functions $B\xrightarrow{f_n}\bR$, i.e. Baire-1. 
  
\end{itemize}

The tangential convergence of \Cref{def:prufn}\Cref{item:def:prufn:bord} suggests what looks like intermediate-type behavior: $f\in \bR^B$ might be the {\it non-tangential} limit at $t\searrow 0$ of a continuous function $\overline{f}$: in place of \Cref{eq:radlim}, impose the stronger requirement
\begin{equation}\label{eq:ntlim}
  f(b) = \lim_{(b',t)\xrightarrow[\text{within any $W_{b,C}$}]{\quad}(b,0)}\overline{f}(b,t)
  ,\quad
  B\times \bR_{>0}\xrightarrow[\text{continuous}]{\quad \overline{f}\quad} \bR,
\end{equation}
where $W_{b,C}$, $C>0$ is the wedge
\begin{equation}\label{eq:wbc}
  W_{b,C}:=\left\{(b',t)\in B\times \bR_{>0}\ |\ d(b,b')\le Ct\right\}.
\end{equation}
The language is again borrowed from complex/harmonic analysis (\cite[\S 11.18]{rud_rc_3e_1987}, \cite[\S 3]{zbMATH03983594}, \cite[\S 1]{zbMATH00001169}, etc.), where one often considers such limits at points on the unit circle of functions defined in the open unit disk. 

In the current setup, though, the $W_{b,C}$ of \Cref{eq:wbc} have less to do with the geometry of tangents than with bounding, in a controlled fashion, the distance $d(b',b)$ of the first coordinate of $(b',t)$ from $b$ in terms of the second coordinate. The following gadgetry is intended to achieve that same purpose.

\begin{definition}\label{def:smlgerm}
  \begin{enumerate}[(1),wide]
  \item\label{item:def:smlgerm:smgg} A {\it (smallness) gauge} is a {\it germ} at $0$ of non-negative, 0-vanishing continuous functions $\bR_{\ge 0}\to \bR_{\ge 0}$ in the usual sheaf-theoretic sense of the term `germ' (\cite[\S I.1, p.2]{bred_shf_2e_1997}, \cite[Terminology 2.1.2]{ks_shv-mfld}): an equivalence class of continuous
    \begin{equation*}
      [0,\varepsilon)
      \xrightarrow{\quad\phi\quad}
      \bR_{\ge 0}
      ,\quad
      \phi(0)=0
    \end{equation*}
    with two such functions declared equivalent if they are both defined and agree on some sufficiently small $[0,\varepsilon')$. 
    
    There is an obvious ordering on the collection $\cat{Gg}$ of smallness gauges: $\phi\le \phi'$ if $\phi(x)\le \phi'(x)$ for all sufficiently small $x\in \bR_{\ge 0}$ (notation: $x\sim 0$).

  \item\label{item:def:smlgerm:bdrylim} Let $\Phi=\{\phi\}$ be a family of gauges and $(B,d)$ a metric space. The function $B\times \bR_{>0}\xrightarrow{\overline{f}} \bR$ {\it has $\Phi$-(boundary-)limit} (or {\it $\Phi$-(boundary-)converges to}) a function
    \begin{equation*}
      B\times \bR_{\ge 0} \supset B\times\{0\} \cong B
      \xrightarrow{\quad f\quad}
      \bR
    \end{equation*}
    if
    \begin{equation*}
      f(b) = \lim_{(b',t)\xrightarrow[\text{within $W_{b,\phi}$}]{\quad}(b,0)}\overline{f}(b',t)
      ,\quad
      \forall b\in B
      ,\quad
      \forall \phi\in \Phi
    \end{equation*}
    for the analogue
    \begin{equation}\label{eq:wbphi}
      W_{b,\phi}:=\left\{(b',t)\in B\times \bR_{>0},\ t\sim 0\ |\ d(b,b')\le\phi(t)\right\}
    \end{equation}
    of \Cref{eq:wbc}.

  \item\label{item:def:smlgerm:phiconvseq} Similarly, a sequence $(f_{n})_{n}$ of (typically continuous) functions $B\xrightarrow{f_{n}}\bR$ is said to {\it have $\Phi$-limit} (or {\it $\Phi$-converge to}) $B\xrightarrow{f}\bR$ if
    \begin{equation}\label{eq:def:smlgerm:phiconvseq}
      f(b) = \lim_{\left(b',\frac 1n\right)\xrightarrow[\text{within $W_{b,\phi}$}]{\quad}(b,0)}f_n(b')
      ,\quad
      \forall b\in B
      ,\quad
      \forall \phi\in \Phi.
    \end{equation}
  \end{enumerate}
  We abbreviate $\{\phi\}$-convergence to $\phi$ convergence for singletons. 
\end{definition}

\begin{theorem}\label{th:phiconv}
  For a function $B\xrightarrow{f}\bR$ on a metric space $(B,d)$ the following conditions are equivalent.
  \begin{enumerate}[(a),wide]
  \item\label{item:th:phiconv:contggfam} $f$ is the $\Phi$-boundary-limit of a continuous function $B\times \bR_{>0}\xrightarrow{\overline{f}}\bR$ for some (all) upper-bounded families $\Phi\subset (\cat{Gg},\le)$. 

  \item\label{item:th:phiconv:contggsing} $f$ is the $\phi$-boundary-limit of a continuous function $B\times \bR_{>0}\xrightarrow{\overline{f}}\bR$ for some (all) $\phi\in \cat{Gg}$. 

  \item\label{item:th:phiconv:seqggfam} $f$ is the $\Phi$-boundary-limit of a sequence $(f_n)$ of continuous functions for some (all) upper-bounded families $\Phi\subset (\cat{Gg},\le)$. 

  \item\label{item:th:phiconv:seqggsing} $f$ is the $\phi$-limit of a sequence $(f_n)$ of continuous functions for some (all) $\phi\in \cat{Gg}$.

  \item\label{item:th:phiconv:b1} $f$ is Baire-1. 
  \end{enumerate}
\end{theorem}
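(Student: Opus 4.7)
Because $\phi \le \phi'$ entails $W_{b,\phi} \subseteq W_{b,\phi'}$ for $t$ small, $\phi'$-convergence implies $\phi$-convergence; so an upper-bounded family $\Phi \subset \cat{Gg}$ collapses to its bound, and a singleton is trivially its own bound. This delivers (a) $\Leftrightarrow$ (b) and (c) $\Leftrightarrow$ (d); the ``some/all'' clauses will coincide automatically once each of these is matched with (e). To pass between a continuous $\bar f$ and a sequence $(f_n)$, take $f_n := \bar f(\cdot, 1/n)$ in one direction and linearly interpolate $(f_n)$ in $t$ on each slab $[1/(n+1),\,1/n]$ in the other. Passing to the non-decreasing majorant $\tilde\phi(x) := \sup_{y \le x}\phi(y)$ (still a gauge) bounds $d(b,b') \le \tilde\phi(1/n)$ uniformly on each slab and transfers $\phi$-convergence across the interpolation.

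\textbf{The easy direction (b) $\Rightarrow$ (e).} For any $b$, the points $(b, 1/n)$ trivially lie in every $W_{b,\phi}$, so $\phi$-convergence of $\bar f$ gives $f(b) = \lim_n \bar f(b, 1/n)$, exhibiting $f$ as the pointwise limit of the continuous functions $\bar f(\cdot, 1/n)$. Thus $f$ is sequentially Baire, hence Baire-1 on the metric space $B$ by Kechris' Theorem 24.10.

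\textbf{The hard direction (e) $\Rightarrow$ (d) for any prescribed $\phi$.} Fix $\phi$ and Baire-1 $f$, and choose continuous $F_n \to f$ pointwise. Bare continuity of $F_n$ does \emph{not} suffice for $\phi$-convergence --- the tent sequence $F_n(x) := \max(0,\,1 - n^2|x|) \to \chi_{\{0\}}(x)$ pointwise on $\bR$ already fails to $\phi$-converge for $\phi(t) = t$, since $F_n(1/n) = 0 \not\to 1$. The remedy is to insist that the approximants be Lipschitz with prescribable constants $L_n$: every Baire-1 function on a metric space is a pointwise limit of Lipschitz functions (classical, via a mix of inf/sup Lipschitz envelopes against dilatations, Baire's USC/LSC sandwich decomposition, and a diagonal selection). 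Given such Lipschitz $F_n \to f$ with constants $L_n$, extract a subsequence $h_k := F_{n_k}$ with $L_{n_k}\,\tilde\phi(1/k) \to 0$; then for any $b_k$ with $d(b_k, b) \le \tilde\phi(1/k)$,
\[
|h_k(b_k) - f(b)| \;\le\; L_{n_k}\,\tilde\phi(1/k) \;+\; |h_k(b) - f(b)| \;\xrightarrow{k \to \infty}\; 0,
\]
the first summand by choice of subsequence, the second by pointwise convergence. Padding with repetitions to fill in the missing indices gives a full $\tilde\phi$-convergent sequence (whence also $\phi$-convergent), and linear interpolation in $t$ then produces the continuous $\bar f$ for (b).

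\textbf{Main obstacle.} The pivotal step is the Lipschitz refinement: turning an abstract sequentially-Baire presentation of $f$ into Lipschitz approximants whose constants can be tuned against an arbitrary gauge $\phi$. This relies essentially on the metric structure of $B$ and is the place where the ``metric space'' hypothesis really earns its keep; the remaining manoeuvres amount to bookkeeping with the wedges $W_{b,\phi}$ and the standard sequentially-Baire $=$ Baire-1 equivalence on metric spaces.
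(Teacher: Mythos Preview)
Your approach is essentially the paper's: collapse upper-bounded families to singleton gauges, pass between continuous $\bar f$ and sequences via slicing and linear interpolation on slabs, and for the hard direction (e)$\Rightarrow$(d) exploit Lipschitz approximants whose dilatations grow slowly relative to $\phi(1/n)$. Two execution slips deserve flagging, though.

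First, the ``subsequence'' step in (e)$\Rightarrow$(d) does not work as written. If the Lipschitz constants satisfy, say, $L_n = 1/\tilde\phi(1/n)$, then for any \emph{strictly increasing} $(n_k)$ one has $n_k \ge k$ and hence $L_{n_k}\,\tilde\phi(1/k) \ge \tilde\phi(1/k)/\tilde\phi(1/n_k) \ge 1$ (by monotonicity of $\tilde\phi$), so no such subsequence exists. What is actually needed is a \emph{non-decreasing} selection $n_k\to\infty$ with long constant stretches: repeat each $F_j$ over a block of indices $k$ long enough that $L_{j+1}\,\tilde\phi(1/k) < 1/j$ before switching to $F_{j+1}$. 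This is precisely the paper's construction; your ``padding with repetitions'' gestures at it, but the order you give (extract a subsequence first, then pad) is backwards.

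Second, in the interpolation step the non-decreasing majorant $\tilde\phi$ controls only the \emph{upper} endpoint of each slab: from $(b',t)\in W_{b,\phi}$ with $t\in[1/(n{+}1),\,1/n]$ you do get $d(b,b')\le\tilde\phi(1/n)$ and hence $(b',1/n)\in W_{b,\tilde\phi}$, but \emph{not} $(b',1/(n{+}1))\in W_{b,\tilde\phi}$, since $\tilde\phi(1/(n{+}1))\le\tilde\phi(1/n)$ is the wrong inequality. The remedy (the paper's) is to first obtain the sequence convergent for a gauge $\phi'$ strictly dominating $\phi$ across adjacent slabs, e.g.\ with $\min_{I_n}\phi' > \max_{I_n}\phi$; since you are establishing $(d)_\forall$ anyway, this is free.
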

\begin{proof}
  Marking the universal (`all') and existential (`some') with `$\forall$' and `$\exists$' subscripts respectively, $\bullet_{\forall}$ formally imply the respective $\bullet_{\exists}$. Additionally,
  \begin{equation*}
    \Cref{item:th:phiconv:contggfam}_{\bullet}
    \xRightarrow{\quad}
    \Cref{item:th:phiconv:seqggfam}_{\bullet}
    \quad\text{and}\quad
    \Cref{item:th:phiconv:contggsing}_{\bullet}
    \xRightarrow{\quad}
    \Cref{item:th:phiconv:seqggsing}_{\bullet}
    ,\quad
    \bullet\in \{\forall,\ \exists\}
  \end{equation*}
  by setting $f_n:=\overline{f}\left(\bullet,\frac 1n\right)$. Because furthermore $\phi$-convergence persists upon passing to smaller $\phi$, one can substitute an upper bound for upper-bounded $\Phi\subset \cat{Gg}$ to recover the `all' family statements from their respective singleton counterparts. All conditions plainly imply \Cref{item:th:phiconv:b1} because pointwise convergence is simply $0$-convergence, so that
  \begin{equation}\label{eq:th:phiconv:takestock}
    \Cref{item:th:phiconv:contggfam}_{\forall}
    \xLeftrightarrow{\quad}
    \Cref{item:th:phiconv:contggsing}_{\forall}
    \xRightarrow{\quad}
    \left(\text{everything else}\right)
    \xRightarrow{\quad}
    \Cref{item:th:phiconv:b1}.
  \end{equation}
  As it will be convenient to ultimately settle into arguments involving the discrete conditions \Cref{item:th:phiconv:seqggfam} and/or \Cref{item:th:phiconv:seqggsing}, we simplify matters to that extent.  
  
  \begin{enumerate}[(I),wide]
  \item {\bf : $\Cref{item:th:phiconv:seqggsing}_{\forall}$ $\xRightarrow{\quad}$ $\Cref{item:th:phiconv:contggsing}_{\forall}$ } Fix an arbitrary gauge $\phi$, as required by $\Cref{item:th:phiconv:contggsing}_{\forall}$. Then pick a larger gauge $\phi'\ge \phi$ with the property that
    \begin{equation}\label{eq:condphi'}
      \min_{I_n}\phi'
      >
      \max_{I_n}\phi
      ,\quad\forall n\gg 0
      \quad\text{with}\quad
      I_n:=\left[\frac 1{n+1},\ \frac 1n\right],
    \end{equation}
    and a $\phi'$-convergent sequence $f_n\xrightarrow[n]{\ }f$ afforded by $\Cref{item:th:phiconv:seqggsing}_{\forall}$. Finally, set $\overline{f}\left(\bullet,\frac 1n\right):=f_n$ and thence interpolate linearly:
    \begin{equation*}
      \begin{aligned}
        \overline{f}\left(\bullet,\ \frac {s}{n+1}+\frac{1-s}{n}\right)
        &:=
          s\overline{f}\left(\bullet,\frac {1}{n+1}\right)
          +(1-s)\overline{f}\left(\bullet,\frac{1}{n}\right)\\
        &=
          sf_{n+1}+(1-s)f_n
          ,\quad s\in [0,1]. 
      \end{aligned}      
    \end{equation*}
    By the very choice of $\phi'$, assuming $n\gg 0$ and $t\in I_n$ (in the notation of \Cref{eq:condphi'}),
    \begin{equation*}
      (b',t)\in W_{b,\phi'}
      \xRightarrow{\quad}
      \left(b',\frac 1{n+1}\right)
      ,\
      \left(b',\frac 1{n}\right)
      \in W_{b,\phi}.
    \end{equation*}
    But then for $n\gg 0$ both $f_{n+1}(b')$ and $f_n(b')$ will be close to $f(b)$, hence so will their convex combination $f(b')$.
    
    Given \Cref{eq:th:phiconv:takestock}, the following implication will close the circle and complete the proof.

  \item {\bf : $\Cref{item:th:phiconv:b1}$ $\xRightarrow{\quad}$ $\Cref{item:th:phiconv:seqggsing}_{\forall}$ } Recall once more \cite[Theorem 24.10]{kech_descr}: Baire-1 real-valued functions on metric spaces are Baire (i.e. sequential limits of continuous functions); the assumed separability of the domain in loc. cit. is inessential.

    In order to achieve \Cref{eq:def:smlgerm:phiconvseq} for such $f$ we need $|f(b) - f_n(b')|$ appropriately small. We have an estimate
    \begin{equation*}
      \begin{aligned}
        |f(b) - f_n(b')|
        &\le |f(b) - f_n(b)| + |f_n(b) - f_n(b')|\\
        &\le |f_n(b)-f(b)| + \mathrm{dil}(f_n)\cdot d(b,b')\\
        &\le |f_n(b)-f(b)| + \mathrm{dil}(f_n)\cdot \phi\left(\frac 1n\right)
          \quad\text{whenever }\left(b',\frac 1n\right)\in W_{b,\phi},
      \end{aligned}      
    \end{equation*}
    where
    \begin{equation*}
      \mathrm{dil}(\psi)
      :=
      \sup_{x\ne x'}\frac{d_Y(\psi x,\psi x')}{d_X(x,x')}
      \in \bR_{\ge 0}\sqcup \{\infty\}
      ,\quad
      (X,d_X)\xrightarrow{\quad\psi\quad}(Y,d_Y)
    \end{equation*}
    denotes the {\it dilatation} (\cite[Definition 1.1]{grom_metr-struct_2007}, \cite[Definition 1.4.1]{bbi}) of a function between two metric spaces. It will be enough, then, in selecting a sequence $(f_n)$ of continuous functions converging pointwise to $f$, to also ensure that
    \begin{equation}\label{eq:smalldilrepl}
      \mathrm{dil}(f_n)\cdot \phi\left(\frac 1n\right)
      \xrightarrow[\quad n\to \infty\quad]{\quad}
      0.
    \end{equation}
    Because $a_n:=\phi(1/n)$ converges to 0, it will furthermore be enough to have $g_n$ converging pointwise to $f$ and each with finite dilatation (i.e. have $g_n$ {\it Lipschitz} \cite[Definition 1.1]{grom_metr-struct_2007}). For given that, we can produce the desired sequence $(f_n)$ by repeating each $g_i$ sufficiently so as to ensure the dilatations increase as slowly as \Cref{eq:smalldilrepl} requires:
  \begin{itemize}[wide]
  \item Set the initial segment
    \begin{equation*}
      f_n = g_1,\quad 0<n\le m_1
      \quad\text{with}\quad
      \mathrm{dil}(g_2)\cdot a_m < 1,\quad\forall m>m_1.
    \end{equation*}

  \item Then, similarly,
    \begin{equation*}
      f_n = g_2,\quad m_1<n\le m_2
      \quad\text{with}\quad
      \mathrm{dil}(g_3)\cdot a_m < \frac 12,\quad\forall m>m_2.
    \end{equation*}

  \item And it should be clear how the process continues.     
  \end{itemize}
  As for the last step of producing a Lipschitz sequence converging to $f$, this is well-known to be possible for Baire functions: it is remarked in the course of the proof of \cite[Theorem 6]{alp_ext-diff}, for instance, that this follows from \cite[Proposition 3.9]{cl_baire}. 
  \end{enumerate}
\end{proof}

\begin{remarks}\label{res:cl_baire_p39}
  \begin{enumerate}[(1),wide]
  \item\label{item:res:cl_baire_p39:initelab} Some small elaboration might be of use to those readers who (like this one) find, on first perusal, the reference to \cite[Proposition 3.9]{cl_baire} in the proof of \cite[Theorem 6]{alp_ext-diff} somewhat cryptic.

    Consider a (real) {\it unital linear lattice} $\Phi$ of functions $X\to \bR$ on a set $X$:
    \begin{itemize}[wide]
    \item $1\in \Phi$ (unitality);

    \item and a linear space under the usual additive/scaling;

    \item and a lattice under the standard ordering, i.e. closed under binary (or finite) maxima and minima.
    \end{itemize}
    Coupled, \cite[Propositions 3.1 and 3.9]{cl_baire} then show that the class $\Phi^p$ of sequential pointwise limits of $\Phi$-members depends only on the collection
    \begin{equation*}
      \cZ(\Phi):=
      \left\{f^{-1}(0)\ |\ f\in \Phi\right\}\subseteq 2^X
    \end{equation*}
    of $\Phi$-member zero-sets. Now simply note that for metric spaces $(X,d)$ continuous and Lipschitz real-valued functions (both linear lattices containing the constants!) have the same zero-sets: precisely the closed subsets of $X$. Indeed, every closed $Z\subseteq X$ is the vanishing locus of the (1-Lipschitz) distance function
    \begin{equation*}      
      X\ni x
      \xmapsto{\quad}
      d(x,Z):=\inf\{d(x,y)\ |\ y\in Z\}
      \in \bR_{\ge 0}
    \end{equation*}
    attached to $Z$.

  \item In fact, more is true, per the selfsame \cite[Proposition 3.9]{cl_baire}: $\Phi^p$ depends not, strictly speaking, on $\cZ(\Phi)$, but rather only on the formally less informative collection $\cZ(\Phi)^{c\sigma\delta}$ defined by chaining the operators
    \begin{equation*}
      \begin{aligned}
        \text{class of sets }\cP
        &\xmapsto{\quad}\cP^c
          :=\left\{\text{complements of $Z\in \cP$}\right\}\\
        \cP
        &\xmapsto{\quad}\cP^{\sigma}
          :=\left\{\text{countable unions of $\cP$-members}\right\}
          \quad\text{and}\\
        \cP
        &\xmapsto{\quad}\cP^{\delta}
          :=\left\{\text{countable intersections of $\cP$-members}\right\}.
      \end{aligned}
    \end{equation*}
    Given more structure on $X$, the same principle would allow recovering Baire-1 functions as pointwise limits of even more restrictive or better-behaved classes of functions. If $X$ is a smooth manifold, for instance, the unital linear lattice of smooth functions also recovers precisely the closed subsets of $X$ as zero-sets \cite[Theorem 2.29]{lee_mfld} (a result frequently attributed to Whitney, as in \cite[\S 1, first paragraph]{zbMATH05358251}).
  \end{enumerate}
\end{remarks}

For the purpose of applying all of this back to Pr\"ufer manifolds, it will be useful to also have ``relative'' versions handy for the various properties listed in \Cref{th:phiconv}, all equivalent to their ``absolute'' counterparts.

\begin{definition}\label{def:relb1}
  Consider a subset $B\lhook\joinrel\xrightarrow{\iota}(B',d)$ of a metric space.
  \begin{enumerate}[(1),wide]
  \item A function $B\xrightarrow{f}\bR$ is said to satisfy the condition \Cref{item:def:smlgerm:bdrylim} or \Cref{item:def:smlgerm:phiconvseq} of \Cref{def:smlgerm} {\it $\iota$-relatively} (or {\it relatively to $\iota$}) if the functions $\overline{f}$ (the sequence $(f_n)$) with the requisite properties is definable on the larger $B'\times \bR_{>0}$ (respectively $B'$).

  \item The same terminology applies to Baire functions: $B\xrightarrow{f}\bR$ is {\it $\iota$-relatively} Baire-1 if it is a sequential pointwise limit of continuous functions defined globally on $B'\supset B$. 
  \end{enumerate}
\end{definition}

The preceding terms are convenient to have, and use, but only provisionally:

\begin{theorem}\label{th:phiconvrel}
  Let $B\lhook\joinrel\xrightarrow{\iota} (B',d)$ be a subset of a metric space, correspondingly regarded as a metric space $(B',d)$ in its own right.

  For a function $B\xrightarrow{f}\bR$ the conditions of \Cref{th:phiconv} are all equivalent to their $\iota$-relative counterparts in the sense of \Cref{def:relb1}. 
\end{theorem}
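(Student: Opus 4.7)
The relative conditions evidently imply their absolute counterparts by restricting the extending function on $B' \times \bR_{>0}$ (or sequence on $B'$) to $B$, so the substance lies in the reverse direction. My plan is to establish one bridge implication, namely that the absolute Baire-1-ness of $f : B \to \bR$ gives the $\iota$-relative form of \Cref{th:phiconv}\Cref{item:th:phiconv:seqggsing}$_{\forall}$, and then observe that the remaining implications within \Cref{th:phiconv} transport verbatim with $B'$ replacing $B$ in the domains of the approximating functions.

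For the bridge, I would mimic the final step of the proof of \Cref{th:phiconv}. By \cite[Proposition 3.9]{cl_baire}, a Baire-1 function $f$ on $B$ is the pointwise limit of Lipschitz functions $g_i : B \to \bR$. The classical McShane formula
\[
    \tilde{g}_i(b') := \inf_{b \in B}\bigl[g_i(b) + \mathrm{dil}(g_i)\cdot d(b',b)\bigr],
    \quad b' \in B',
\]
extends each $g_i$ to a Lipschitz function on $B'$ of the same dilatation. Feeding the $\tilde{g}_i$ into the repetition scheme from \Cref{th:phiconv} produces a sequence $(f_n)$ of Lipschitz functions on $B'$ with $\mathrm{dil}(f_n)\cdot \phi(1/n) \to 0$ for any preselected gauge $\phi$, and the estimate
\[
    |f(b) - f_n(b')| \le |f(b) - f_n(b)| + \mathrm{dil}(f_n)\cdot d(b,b')
\]
remains valid when $b'$ ranges over $B'$ instead of $B$, yielding the $\iota$-relative $\phi$-convergence at every $b \in B$.

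For the remaining relative equivalences, I would reuse verbatim the linear-interpolation trick (producing a continuous $\overline{f}$ on $B' \times \bR_{>0}$ from a $\phi$-convergent sequence on $B'$) together with the singleton-versus-upper-bounded-family manipulations from \Cref{th:phiconv}; none of those steps examines the approximating functions at points of $B$ rather than $B'$, so they work equally well in the relative setting. Composing with the bridge and with restriction closes every loop A$(\cdot) \Leftrightarrow$ R$(\cdot)$ required.

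The one step carrying any genuinely new content is thus the dilatation-preserving Lipschitz extension, a standard classical fact \cite[\S 1]{grom_metr-struct_2007}; beyond that, I do not anticipate any further obstacle. In particular, I would expect the argument to be insensitive to whether $B$ is dense or closed or neither in $B'$, since the McShane extension does not require any topological compatibility between the two.
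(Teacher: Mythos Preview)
Your argument is correct. The only place it differs from the paper is in the ``bridge'' step. The paper does not invoke McShane at all; instead, it re-applies \cite[Proposition 3.9]{cl_baire} one level up: the unital linear lattices
\[
\Phi_1=\{\text{continuous }B\to\bR\}
\quad\text{and}\quad
\Phi_2=\{\text{restrictions to }B\text{ of continuous }B'\to\bR\}
\]
have the same zero-sets (every closed $Z\subseteq B$ is cut out by $d(\cdot,Z)$, which is already globally defined on $B'$), hence $\Phi_1^p=\Phi_2^p$, i.e.\ Baire-1 on $B$ coincides with $\iota$-relative Baire-1. From there the paper regards the remaining relative equivalences as a repetition of the proof of \Cref{th:phiconv}, exactly as you do.

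Your route via the McShane formula is slightly more concrete: it hands you Lipschitz functions on $B'$ with \emph{the same} dilatation, so the repetition scheme and the estimate $|f(b)-f_n(b')|\le|f(b)-f_n(b)|+\mathrm{dil}(f_n)\,d(b,b')$ carry over with no further comment. The paper's route is shorter and stays within the same abstract lattice framework already used in \Cref{res:cl_baire_p39}, but it leaves the Lipschitz-on-$B'$ step implicit (one has to notice that the zero-set argument applies equally well with ``Lipschitz'' in place of ``continuous'' in $\Phi_2$). Either way the content is the same; your version simply makes the extension explicit rather than packaging it inside Cs\'asz\'ar--Laczkovich.
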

\begin{proof}
  That every $\iota$-relative condition implies the $\iota$-relative Baire-1 property goes through as before, and the latter's equivalence to plain Baire-1 follows from \cite[Proposition 3.9]{cl_baire}, as recalled in \Cref{res:cl_baire_p39}\Cref{item:res:cl_baire_p39:initelab}. Indeed, although in general not all continuous functions on $B'$ extend to $B$, the two classes of functions $B'\to R$ (plain continuous and extensible ones) have the same zero-sets: the closed subsets of $(B',d)$.
\end{proof}

In particular, going back to the non-tangential ``approach domains'' \Cref{eq:wbc}:

\begin{corollary}\label{cor:approachnontg}
  A function $(B,d)\xrightarrow{f}\bR$ on a metric space is Baire-1 precisely when it is $\iota$-relatively the non-tangential limit of a continuous function $B'\times \bR_{>0}\xrightarrow{\overline{f}}\bR$ in the sense of \Cref{eq:ntlim} for
  \begin{itemize}[wide]
  \item some
  \item or equivalently, every isometric embedding $(B,d)\xrightarrow{\iota}(B',d)$. 
  \end{itemize}
\end{corollary}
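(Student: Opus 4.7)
The corollary is a direct specialization of \Cref{th:phiconvrel}. First, the non-tangential wedges $W_{b,C}$ from \Cref{eq:wbc} are precisely the germ-wedges $W_{b,\phi_C}$ from \Cref{eq:wbphi} for the linear gauges $\phi_C(t) := Ct \in \cat{Gg}$. Thus, the non-tangential limit condition \Cref{eq:ntlim} (with $B'$ in place of $B$) is exactly $\Phi_{\mathrm{nt}}$-boundary-convergence of $\overline{f}$ to $f$ in the sense of \Cref{def:smlgerm}, where $\Phi_{\mathrm{nt}} := \{\phi_C : C > 0\}$.

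The decisive observation, and arguably the only substantive step, is that $\Phi_{\mathrm{nt}}$, despite the Lipschitz slopes $C$ ranging unboundedly over $\bR_{>0}$, \emph{is} upper-bounded in the germ partial order $(\cat{Gg}, \le)$. Indeed, any super-linear gauge works as an upper bound: take for instance $\phi^*(t) := \sqrt{t}$, and note that $Ct \le \sqrt{t}$ holds on the neighborhood $[0, 1/C^2]$ of $0$, which is all the germ ordering demands. The initial intuition that $\Phi_{\mathrm{nt}}$ looks ``unbounded'' dissolves once one remembers that $\le$ on $\cat{Gg}$ only checks inequalities in a (possibly $C$-dependent) neighborhood of $0$, so any $\phi^*$ with $\phi^*(t)/t \to \infty$ as $t \searrow 0$ dominates the entire family.

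The conclusion now follows from the equivalence of the conditions in \Cref{th:phiconvrel} in its $\iota$-relative form, applied to the upper-bounded family $\Phi_{\mathrm{nt}}$. If $f$ is Baire-1, then \Cref{th:phiconvrel} furnishes a continuous $\overline{f} : B' \times \bR_{>0} \to \bR$ that $\Phi_{\mathrm{nt}}$-boundary-converges to $f$, i.e. $\overline{f}$ realizes $f$ as a non-tangential limit in the relative sense. Conversely, if $f$ is such a non-tangential limit of some continuous $\overline{f}$, then in particular $f$ is the $\phi_1$-boundary-limit of $\overline{f}$ (taking $C = 1$), so $f$ is Baire-1 by \Cref{th:phiconvrel}. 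The ``some versus every isometric embedding'' dichotomy in the statement dissolves in the same stroke, since both versions are equivalent to the $\iota$-independent absolute Baire-1 property of $f$.
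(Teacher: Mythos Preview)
Your proof is correct and follows essentially the same approach as the paper's: both identify the non-tangential limit as $\Phi$-convergence for the family $\Phi=\{t\mapsto Ct\}$ of linear gauges, observe that this family is upper-bounded in $(\cat{Gg},\le)$, and then invoke \Cref{th:phiconv} together with its relative version \Cref{th:phiconvrel}. The only cosmetic difference is your choice of upper bound $\sqrt{t}$ versus the paper's $1/\ln(1/t)$, which is immaterial.
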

\begin{proof}
  This is the equivalence \Cref{item:th:phiconv:b1} $\iff$ \Cref{item:th:phiconv:contggfam} of \Cref{th:phiconv}, along with its relative version in \Cref{th:phiconvrel}, applied to the family
  \begin{equation*}
    \Phi = \left\{x\mapsto Cx\ |\ C>0\right\}\subset \cat{Gg} 
  \end{equation*}
  of smallness gauges: simply note that the family is indeed upper-bounded, for instance by (the continuous extension at 0 of) $x\mapsto \frac 1{\ln(1/x)}$.
\end{proof}

We are now ready for the outstanding implication in \Cref{th:copi1pruf2}.

\pf{th:copi1pruf2}
\begin{th:copi1pruf2}[$\xLeftarrow{\quad}$]

  Extend a Baire-1 function $S^{\times}\xrightarrow{f}\bZ$ to $S=S^{\times}\sqcup\{p_0\}$ by $p_0\mapsto 0$ (still Baire-1), and realize that, per \Cref{cor:approachnontg}, as a non-tangential limit \Cref{eq:radlim} (with $B:=S$) for a continuous real-valued function $\overline{f}$ defined over $\bH_{++}$. The imposed non-tangential-limit property then extends $\overline{f}$ across the boundary
  \begin{equation}\label{eq:cuprp}
    \partial \bP\bF_{S,\partial}
    =
    \coprod_{p\in S}\cR_p
    \quad\text{of}\quad
    \bP\bF_{S,\partial}
  \end{equation}
  to a continuous function (denoted economically by the same symbol) $\bP\bF_S\xrightarrow{\overline f}\bR$, taking integer values on \Cref{eq:cuprp}, and in particular collapsing every individual ray bouquet $\cR_p$ (for every $p\in S$) to the single integer $\overline{f}(p)=f(p)\in \bZ$. 

  Finally, the original $S^{\times}\xrightarrow{f}\bZ$ will be recovered as the map $\pi_1(\bP\bF_S)\xrightarrow{g_*} \pi_1(\bS^1)$ for the continuous map $\bP\bF_S\xrightarrow{g}\bS^1$ defined by
    \begin{equation*}
      g(x)=
      \begin{cases}
        \exp\left(2\pi i \overline{f}(x)\right)
        \in \bS^1
        &\quad\text{for $x$ in the first copy of }\bH_{++}\\
        \exp\left(-2\pi i \overline{f}(x)\right)
        \in \bS^1
        &\quad\text{for $x$ in the second copy of }\bH_{++}\\
      \end{cases}
    \end{equation*}
    and, naturally (and compatibly by continuity), $g(x)=1\in \bS^1$ for $x$ in \Cref{eq:cuprp}.
\end{th:copi1pruf2}

\addcontentsline{toc}{section}{References}

\def\polhk#1{\setbox0=\hbox{#1}{\ooalign{\hidewidth
  \lower1.5ex\hbox{`}\hidewidth\crcr\unhbox0}}}
  \def\polhk#1{\setbox0=\hbox{#1}{\ooalign{\hidewidth
  \lower1.5ex\hbox{`}\hidewidth\crcr\unhbox0}}}
  \def\polhk#1{\setbox0=\hbox{#1}{\ooalign{\hidewidth
  \lower1.5ex\hbox{`}\hidewidth\crcr\unhbox0}}}
  \def\polhk#1{\setbox0=\hbox{#1}{\ooalign{\hidewidth
  \lower1.5ex\hbox{`}\hidewidth\crcr\unhbox0}}}
  \def\polhk#1{\setbox0=\hbox{#1}{\ooalign{\hidewidth
  \lower1.5ex\hbox{`}\hidewidth\crcr\unhbox0}}}

\Addresses

\end{document}